\documentclass{amsart}
\usepackage{latexsym,amsmath,amssymb}
\usepackage{cite}
\newtheorem{thm}{Theorem}[section]
\newtheorem{cor}[thm]{Corollary}
\newtheorem{exam}[thm]{Example}

\newtheorem{prop}[thm]{Proposition}
\theoremstyle{definition}\newtheorem{defn}[thm]{Definition}
\theoremstyle{remark}

\numberwithin{equation}{section}

\begin{document}

\title[]
{On a class of m-Isometric and Quasi-m-isometric operators}

\author{\sc\bf Y. Estaremi, M. S. Al Ghafri and S. Shamsigamchi }
\address{\sc S. Shamsigamchi}
\address{Department of Mathematics, Payame Noor(PNU) university, Tehran, Iran.}
\email{S.shamsi@pnu.ac.ir}
\address{\sc Y. Estaremi}
\email{y.estaremi@gu.ac.ir}
\address{Department of Mathematics, Faculty of Sciences, Golestan University, Gorgan, Iran.}
\address{\sc M. S. Al Ghafri}
\email{mohammed.alghafri@utas.edu.om}
\address{Department of Mathematics, University of Technology and Applied Sciences, Rustaq {329}, Oman.}

\thanks{}

\thanks{}

\subjclass[2020]{47B38}

\keywords{Weighted conditional type operator, Isometric operator, Quasi-isometric operator.}

\date{}

\dedicatory{}

\commby{}

\begin{abstract}
In this paper we characterize $m$-isometric and quasi-$m$-isometric weighted conditional type (WCT) operators on the Hilbert space $L^2(\mu)$. Also, we prove that the subclasses of $m$-isometric and quasi-$m$-isometric of normal WCT operators are coincide. Specially we have the results for multiplication operators. Indeed, we find that for $m\geq 2$, a multiplication operator $M_u$ is $m$-isometric (quasi-$m$-isometric) if and only if it is isometric (quasi-isometric). Some examples are provided to illustrate our results.

\end{abstract}

\maketitle

\section{ \sc\bf Introduction and Preliminaries}
Let $\mathcal{B}(\mathcal{H})$ be the Banach algebra of bounded linear operators acting on an infinite
dimensional complex Hilbert space $\mathcal{H}$. An operator $T\in \mathcal{B}(\mathcal{H})$ is called $m$-isometry if 
$$B_m=\sum_{k=0}^{m}(-1)^{m-k}\binom{m}{k}T^{*^{k}}T^{k}=0$$
and also $T$ is called quasi-m-isometry if
 $$\sum_{k=0}^{m}(-1)^{m-k}\binom{m}{k}T^{*^{k+1}}T^{k+1}=T^*B_mT=0.$$ 
 Specially, the operator $T$ is quasi-2-isometry if 
$$T^{*^3}T^3-2T^{*^2}T^2+T^*T=0.$$

Also, $T$ is called quasi-isometry if $T^{*^2}T^2=T^*T$. Clearly every
quasi-2-isometric operator is quasi-$m$-isometric (\cite{mp1}, Theorem 2.4). Moreover, the
classes of quasi-isometric operators and 2-isometric operators are sub-classes of the
class of quasi-2-isometric operators. In fact, inclusions are proper. There are some examples of quasi-2-isometric operators which are not quasi-isometric, one can see (\cite{mp1}, Example 1.1). For more details on $m$-isometric operators one can see \cite{as3, ber, bj,bm,cot,jl,mp} and references therein.\\
In this paper we are going to characterize $m$-isometric and quasi-$m$-isometric weighted composition operators on the Hilbert space $L^2(\mu)$.

\section{ \sc\bf Main Results}

Let $(X,\Sigma,\mu)$ be a complete $\sigma$-finite measure space.
For any sub-$\sigma$-finite algebra $\mathcal{A}\subseteq
 \Sigma$, the $L^2$-space
$L^2(X,\mathcal{A},\mu_{\mid_{\mathcal{A}}})$ is abbreviated  by
$L^2(\mathcal{A})$, and its norm is denoted by $\|.\|_2$. All
comparisons between two functions or two sets are to be
interpreted as holding up to a $\mu$-null set. The support of a
measurable function $f$ is defined as $S(f)=\{x\in X; f(x)\neq
0\}$. We denote the vector space of all equivalence classes of
almost everywhere finite valued measurable functions on $X$ by
$L^0(\Sigma)$.

\vspace*{0.3cm} For a sub-$\sigma$-finite algebra
$\mathcal{A}\subseteq\Sigma$, the conditional expectation operator
associated with $\mathcal{A}$ is the mapping $f\rightarrow
E^{\mathcal{A}}f$, defined for all non-negative measurable
functions $f$ as well as for all $f\in L^2(\Sigma)$, where
$E^{\mathcal{A}}f$, by the Radon-Nikodym theorem, is the unique
$\mathcal{A}$-measurable function satisfying
$\int_{A}fd\mu=\int_{A}E^{\mathcal{A}}fd\mu, \ \ \ \forall A\in
\mathcal{A}$. As an operator on $L^{2}({\Sigma})$,
$E^{\mathcal{A}}$ is idempotent and
$E^{\mathcal{A}}(L^2(\Sigma))=L^2(\mathcal{A})$. This operator
will play a major role in our work. Let $f\in L^0(\Sigma)$, then
$f$ is said to be conditionable with respect to $E$ if
$f\in\mathcal{D}(E):=\{g\in L^0(\Sigma): E(|g|)\in
L^0(\mathcal{A})\}$. We
write $E(f)$ in place of $E^{\mathcal{A}}(f)$.\\
Here we recall some basic properties of the conditional expectation operator $E$ on Hilbert space $L^2(\mu)$. Let $f,g\in L^2(\mu)$. Then\\
\vspace*{0.2cm} \noindent $\bullet$ \  If $g$ is
$\mathcal{A}$-measurable, then $E(fg)=E(f)g$.

\noindent $\bullet$ \ $|E(f)|^2\leq E(|f|^2)$.

\noindent $\bullet$ \ If $f\geq 0$, then $E(f)\geq 0$; if $f>0$,
then $E(f)>0$.

\noindent $\bullet$ \ $|E(fg)|^2\leq
E(|f|^2)E(|g|^{2})$, \ (conditional type H\"{o}lder inequality).\\

A detailed
discussion about this operator may be found in \cite{rao}. \\
\begin{defn}
Let $(X,\Sigma,\mu)$ be a $\sigma$-finite measure space and let $\mathcal{A}$ be a
$\sigma$-subalgebra of $\Sigma$, such that $(X,\mathcal{A},\mu)$ is also $\sigma$-finite. Suppose that $E$ is the corresponding conditional
expectation operator on $L^2(\mu)$ relative to $\mathcal{A}$. If $w,u \in L^0(\mu)$ such that $uf$ is conditionable and $wE(uf)\in L^2(\mu)$, for all $f\in L^2(\mu)$, then the corresponding weighted conditional type(WCT) operator is the linear
transformation $M_wEM_u:L^2(\mu)\rightarrow L^2(\mu)$ defined by $f\rightarrow wE(uf)$.\\
\end{defn}
Interested readers can find more information about WCT operators in \cite{e2,e,e1,ej,her,dhd}.

Let $\mathcal{H}$ be the infinite dimensional complex Hilbert
space, let $\mathcal{B(H)}$ be the algebra of all bounded
operators on $\mathcal{H}$. If $T\in \mathcal{B(H)}$, then $\sigma(T)$ and $r(T)$ are the spectrum and spectral radius of $T$, respectively. 
The operator $T\in \mathcal{B(H)}$ is called $p$-hyponormal if $(T^{\ast}T)^p\geq
(TT^{\ast})^p$, for $0<p<\infty$. As is known in the literature, if $T\in \mathcal{B(H)}$ is normal, i.e., $T^*T=TT^*$, then $r(T)=\|T\|$.\\

Here we recall from \cite{e1}, Theorem 2.8, that $$\sigma(M_wEM_u)\setminus \{0\}=ess \ range(E(uw))\setminus \{0\}.$$
If we set $S=S(E(|u|^2))$ and $G=S(E(|w|^2)$, then by conditional type Holder inequality we have $S(M_wEM_u(f))\subseteq S\cap G$, for all $f\in L^2(\mu)$.
Now by using basic properties of WCT operators, we characterize $m$-isometric and quasi-$m$-isometric WCT operators on $L^2(\mu)$.

\begin{thm}\label{t1.6}
Let $T=M_wEM_u$ be a bounded operator on $L^2(\mu)$ and $m\in \mathbb{N}$. Then
\begin{enumerate}
  \item WCT operator $T$ is quasi-$m$-isometry if and only if $|E(uw)|=1$, $\mu$, a.e. Consequently, we get that $T$ is quasi-isometry and only if it is quasi-$m$-isometry, for all $m\in \mathbb{N}$.\\
    
  \item WCT operator $T$ is $m$-isometry if and only if $E_r=\{1\}$, for odd $m$ and $E_r=\{-1\}$, for even $m$, in which 
    $$E_r=\text{essential\ range}(J'_mE(|w|^2)E(|u|^2))\ \ \ \text{and} \ \ \ J'_m=\sum_{k=1}^{m}(-1)^{m-k}\binom{m}{k}|E(uw)|^{2(k-1)}.$$
    Equivalently, $T$ is $m$-isometry if and only if $|J'_m|E(|w|^2)E(|u|^2)=1$, $\mu$, a.e.
  
\end{enumerate}
\end{thm}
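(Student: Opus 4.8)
The plan is to reduce both statements to pointwise identities after computing the iterates of $T$ and $T^*$ explicitly. Using the pull-out property $E(fg)=E(f)g$ for $\mathcal A$-measurable $g$, an easy induction gives
\[
T^k f = w\,(E(uw))^{k-1}\,E(uf),\qquad k\ge 1,
\]
and, since a direct adjoint computation together with the self-adjointness of $E$ yields $T^*=M_{\bar u}EM_{\bar w}$, the same induction gives $T^{*k}g=\bar u\,\overline{E(uw)}^{\,k-1}\,E(\bar w g)$. Composing these and pulling the $\mathcal A$-measurable factors $E(uf)$ and the powers of $E(uw)$ out of the inner conditional expectation, I obtain the single key formula
\[
T^{*k}T^k f = |E(uw)|^{2(k-1)}\,E(|w|^2)\,\bar u\,E(uf),\qquad k\ge 1,
\]
with $T^{*0}T^0=I$. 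Everything below is assembled from this identity together with the binomial identity $\sum_{k=0}^m(-1)^{m-k}\binom{m}{k}t^{k}=(t-1)^m$.

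For part (1) I would write $T^*B_mT=\sum_{k=0}^m(-1)^{m-k}\binom{m}{k}T^{*(k+1)}T^{k+1}$ and substitute the key formula with $k$ replaced by $k+1$ (so the exponent becomes $2k$); the common factor $E(|w|^2)\bar u\,E(uf)$ comes out and the scalar sum collapses by the binomial identity to
\[
T^*B_mT f=\bigl(|E(uw)|^2-1\bigr)^m\,E(|w|^2)\,\bar u\,E(uf).
\]
Thus $T$ is quasi-$m$-isometric iff this operator vanishes. To pass from the vanishing of the operator to the pointwise statement, I would multiply by $u$ and apply $E$ to replace the non-$\mathcal A$-measurable factor $\bar u$ by $E(|u|^2)$, and then probe with the $\mathcal A$-adapted test functions $f=u\chi_A$, for which $E(uf)=\chi_A E(|u|^2)$. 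This produces an identity of the form $\bigl(|E(uw)|^2-1\bigr)^m\rho=0$ with $\rho$ an $\mathcal A$-measurable factor that is strictly positive on $S\cap G$, forcing $|E(uw)|=1$ there. Since the conditional Hölder inequality $|E(uw)|^2\le E(|u|^2)E(|w|^2)$ confines $E(uw)$ to $S\cap G$, the condition reads $|E(uw)|=1$ a.e., uniformly in $m$; as it does not depend on $m$, the quasi-isometric ($m=1$) and quasi-$m$-isometric cases coincide, which is the ``consequently'' clause.

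For part (2) the same key formula gives
\[
B_m f=(-1)^m f+J'_m\,E(|w|^2)\,\bar u\,E(uf),
\]
where $J'_m=\sum_{k=1}^m(-1)^{m-k}\binom{m}{k}|E(uw)|^{2(k-1)}$ is exactly the bracketed scalar of the statement. Now $m$-isometry means $B_m f=0$ for all $f$. The decisive manoeuvre is again to eliminate the $\bar u$ factor: multiplying $B_m f=0$ by $u$, applying $E$, and using $E(|u|^2E(uf))=E(|u|^2)E(uf)$, the identity collapses to $\bigl[(-1)^m+J'_mE(|w|^2)E(|u|^2)\bigr]E(uf)=0$. Testing on $f=u\chi_A$ makes $E(uf)=\chi_A E(|u|^2)$ nonzero on $S$, so I may cancel it to reach $J'_mE(|w|^2)E(|u|^2)=(-1)^{m+1}$ on $S$; translating $(-1)^{m+1}=1$ for odd $m$ and $=-1$ for even $m$ gives the essential-range description $E_r=\{1\}$ and $E_r=\{-1\}$. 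Since $E(|w|^2)E(|u|^2)\ge 0$, the sign of $J'_m$ must be $(-1)^{m+1}$ on this set, so taking moduli yields the symmetric form $|J'_m|E(|w|^2)E(|u|^2)=1$, and sufficiency follows by reversing the steps.

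The main obstacle is the reduction from the operator identities $T^*B_mT=0$ and $B_m=0$ to pointwise conditions, because the factor $\bar u$ appearing in $T^{*k}T^k$ is not $\mathcal A$-measurable and cannot be cancelled directly. The device that makes everything work is to first multiply by $u$ and apply $E$, turning $\bar u$ into the $\mathcal A$-measurable weight $E(|u|^2)$, and then to probe with the $\mathcal A$-adapted test functions $f=u\chi_A$; controlling supports through $S=S(E(|u|^2))$, $G=S(E(|w|^2))$ and the conditional Hölder inequality is what legitimises the cancellations. In the $m$-isometry case this step is the genuinely delicate one, since the bare term $(-1)^m f$ carries no $\bar u$ factor and so must be matched against the conditional-expectation part before the pointwise identity can be extracted and reversed.
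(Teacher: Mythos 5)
Your computation of $T^{*k}T^{k}f=|E(uw)|^{2(k-1)}E(|w|^2)\,\bar u\,E(uf)$ is exactly the identity the paper starts from, and your part (1) is sound: since $T^*B_mT=M_{(|E(uw)|^2-1)^mE(|w|^2)\bar u}EM_u$ has no identity component, the test-function argument with $f=u\chi_A$ (plus the observation that $u=0$ a.e.\ off $S(E(|u|^2))$) gives both directions, and is an acceptable substitute for the paper's route, which instead invokes the norm formula $\|M_{h\bar u}EM_u\|=\|hE(|u|^2)\|_\infty$ for $\mathcal A$-measurable $h$.

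Part (2) has a genuine gap at the sentence ``sufficiency follows by reversing the steps.'' The steps are not reversible: multiplying $B_mf=(-1)^mf+J'_mE(|w|^2)\bar u E(uf)$ by $u$ and applying $E$ annihilates precisely the component of the term $(-1)^mf$ lying in $\ker(EM_u)$. Concretely, if $f\neq 0$ satisfies $E(uf)=0$ --- and such $f$ exist in abundance whenever $\mathcal A\neq\Sigma$ or $u$ vanishes on a set of positive measure --- then $B_mf=(-1)^mf\neq 0$ no matter what pointwise identity $J'_mE(|w|^2)E(|u|^2)=(-1)^{m+1}$ holds; so your derived condition is necessary but not sufficient for $B_m=0$, and the injectivity of $EM_u$ (equivalently, the absence of $0$ from the spectrum of $M_{J'_m E(|w|^2)\bar u}EM_u$) is an additional obstruction your reduction cannot see. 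The paper handles the identity term differently: it writes $B_m=(-1)^mI+N$ with $N=M_{J'_m}M_{E(|w|^2)\bar u}EM_u$ self-adjoint, computes $\|(-1)^mI+N\|$ as the spectral radius $\sup_{\lambda\in\sigma(N)}|(-1)^m+\lambda|$, and identifies $\sigma(N)$ with the essential range $E_r$ of $J'_mE(|w|^2)E(|u|^2)$; the requirement $E_r=\{(-1)^{m+1}\}$ as a set (in particular $0\notin E_r$) is what carries the information your argument discards. (The cited spectral description actually only gives $\sigma(N)\setminus\{0\}$, so even the paper is glossing over the point $0\in\sigma(N)$ when $N$ is not injective, but your version loses this entirely rather than encoding it in the spectrum.) A second, smaller issue: your pointwise conclusion is obtained only on $S=S(E(|u|^2))$, and you do not rule out $\mu(X\setminus S)>0$, where $B_mf=(-1)^mf$ for $f$ supported off $S$ again forbids $m$-isometry.
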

\begin{proof}
(1) 
As we know, for each $k\in \mathbb{N}$, we have 
$$T^k=M_{E(uw)^{k-1}}T, \ \ T^{*^k}=M_{\overline{E(uw)}^{k-1}}T^* \ \ T^*T=M_{E(|w|^2)}M_{\bar{u}}EM_u$$

$$T^{*^k}T^k=M_{|E(uw)|^{2(k-1)}E(|w|^2)}M_{\bar{u}}EM_u.$$

Hence $T=M_wEM_u$ is quasi-$m$-isometry if and only if
$$\sum_{k=0}^{m}(-1)^{m-k}\binom{m}{k}T^{*^{k+1}}T^{k+1}=\sum_{k=0}^{m}(-1)^{m-k}\binom{m}{k}M_{|E(uw)|^{2k}}T^*T=0$$
if and only if $ M_{J_mE(|w|^2)\bar{u}}EM_u=0$
if and only if $$\|J_mE(|u|^2)E(|w|^2)\|_{\infty}=\|M_{J_k\bar{u}}EM_u\|=0,$$
in which $$J_m=\sum_{k=0}^{m}(-1)^{m-k}\binom{m}{k}|E(uw)|^{2k}.$$ 
Clearly, for every $x\in X$ we have $J_m(x)=(|E(uw)|(x)-1)^m$. 
So $T$ is quasi-$m$-isometry if and only if $J_m=0$, $\mu$, a.e., on $S(E(|u|^2))\cap S(E(|w|^2))$ if and only if $|E(uw)|=1$, $\mu$, a.e.\\ 

(2) By definition, $T$ is $m$-isometry if and only if
$$\sum_{k=0}^{m}(-1)^{m-k}\binom{m}{k}T^{*^{k}}T^{k}=((-1)^mI+\sum_{k=1}^{m}(-1)^{m-k}\binom{m}{k}M_{|E(uw)|^{2(k-1)}}T^*T)=0$$
if and only if
$$((-1)^mI+M_{J'_m}M_{E(|w|^2)\bar{u}}EM_u)=0$$
 
if and only if 
$$\|((-1)^mI+M_{J'_m}M_{E(|w|^2)\bar{u}}EM_u)\|=0,$$ 
in which 
$J'_m=\sum_{k=1}^{m}(-1)^{m-k}\binom{m}{k}|E(uw)|^{2(k-1)}$.
 It is easy to see that $M_{J'_m}M_{E(|w|^2)\bar{u}}EM_u$ is a self adjoint operator and so we get that
 
$$\|((-1)^mI+M_{J'_m}M_{E(|w|^2)\bar{u}}EM_u)\|^2=\|((-1)^mI+M_{J'_m}M_{E(|w|^2)\bar{u}}EM_u)^2\|.$$

This implies that 

\begin{align*}
\|((-1)^mI+M_{J'_m}M_{E(|w|^2)\bar{u}}EM_u)\|&=r((-1)^mI+M_{J'_m}M_{E(|w|^2)\bar{u}}EM_u)\\
&=\sup_{\lambda\in E}|(-1)^m+\lambda|,
\end{align*}
 in which 
 $$E_r=\sigma(M_{J'_m}M_{E(|w|^2)\bar{u}}EM_u)=\text{ess \ range}(J'_mE(|w|^2)E(|u|^2)).$$
 
 Therefore $T$ is $m$-isometry if and only if $\sup_{\lambda\in E_r}|(-1)^m+\lambda|=0$ if and only if $E_r=\{1\}$, for odd $m$ and $E_r=\{-1\}$, for even $m$. Equivalently, $T$ is $m$-isometry if and only if $|J'_m|E(|w|^2)E(|u|^2)=1$, $\mu$, a.e.
\end{proof}

Taking $E=I$ and $w=1$ in Theorem $\ref{t1.6}$ we have conditions under which the multiplication operator $M_u$ is quasi-$m$-isometry or $m$-isometry.
 
\begin{cor}
The followings hold for multiplication operator $M_u$ on $L^2(\mu)$.

\begin{itemize}
 \item  
The multiplication operator $M_u$ is $m$-isometry if and only if 
$$(|u(x)|^2-1)^m=\sum_{k=0}^{m}(-1)^{m-k}\binom{m}{k}|u(x)|^{2k}=0,$$
or equivalently $|u|=1$, $\mu$- a.e,.\\

\item Multiplication operator $M_u$ is quasi-$m$-isometry if and only if
$$|u(x)|^2(|u(x)|^2-1)^m=\sum_{k=0}^{m}(-1)^{m-k}\binom{m}{k}|u(x)|^{2(k+1)}=0,$$
 for almost all $x\in X$, or equivalently $|u|(1-|u|^2)=0$, $\mu$-a.e. 
 \end{itemize}
\end{cor}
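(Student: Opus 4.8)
The plan is to obtain both statements as direct specializations of Theorem \ref{t1.6} under the substitution $E=I$ (equivalently $\mathcal{A}=\Sigma$) and $w=1$, which reduces the WCT operator $T=M_wEM_u$ to the multiplication operator $M_u$. Under this choice one has $E(uw)=u$, hence $|E(uw)|=|u|$, together with $E(|w|^2)=1$ and $E(|u|^2)=|u|^2$. Moreover, since $M_u$ is normal, the operator powers simplify completely: $T^k=M_{u^k}$, $T^{*^k}=M_{\bar u^k}$, and therefore $T^{*^k}T^k=M_{|u|^{2k}}$ for every $k$. (This is also consistent with the general formula $T^{*^k}T^k=M_{|E(uw)|^{2(k-1)}E(|w|^2)}M_{\bar u}EM_u$ established in the proof of Theorem \ref{t1.6} upon inserting $E=I$ and $w=1$.)

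For the $m$-isometry statement, I would substitute these identities into the defining sum $B_m=\sum_{k=0}^{m}(-1)^{m-k}\binom{m}{k}T^{*^k}T^k$. Because multiplication operators add and scale through their symbols, this collapses to $B_m=M_{\va}$ with symbol $\va=\sum_{k=0}^{m}(-1)^{m-k}\binom{m}{k}|u|^{2k}$. The key algebraic observation is the binomial identity $\sum_{k=0}^{m}(-1)^{m-k}\binom{m}{k}t^k=(t-1)^m$, applied pointwise with $t=|u|^2$, giving $\va=(|u|^2-1)^m$. Since $M_{\va}=0$ precisely when $\va=0$ $\mu$-a.e., the $m$-isometry condition becomes $(|u|^2-1)^m=0$ $\mu$-a.e., which is equivalent to $|u|=1$ $\mu$-a.e.

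For the quasi-$m$-isometry statement, the same substitution applied to $\sum_{k=0}^{m}(-1)^{m-k}\binom{m}{k}T^{*^{k+1}}T^{k+1}$ yields the multiplication operator with symbol $\sum_{k=0}^{m}(-1)^{m-k}\binom{m}{k}|u|^{2(k+1)}$. Factoring out $|u|^2$ and again invoking the binomial identity gives the symbol $|u|^2(|u|^2-1)^m$. This vanishes $\mu$-a.e. exactly when $|u|=0$ or $|u|=1$ a.e., which is the stated condition $|u|(1-|u|^2)=0$ $\mu$-a.e.

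Since everything reduces to a single binomial identity together with the elementary fact that a multiplication operator is zero if and only if its symbol vanishes almost everywhere, there is no genuine obstacle here. The only point requiring a little care is confirming that, in each bullet, the three displayed descriptions are equivalent: the binomial sum, its factored form, and the resulting support condition on $|u|$. Each of these equivalences follows immediately once the binomial identity is recognized and the nonnegativity of $|u|$ is used to read off the zero set of the factored symbol.
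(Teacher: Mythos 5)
Your proposal is correct and follows essentially the same route as the paper, which derives the corollary simply by setting $E=I$ and $w=1$ in Theorem \ref{t1.6} and invoking the binomial identity $\sum_{k=0}^{m}(-1)^{m-k}\binom{m}{k}t^{k}=(t-1)^{m}$ with $t=|u|^{2}$. Your direct computation of $T^{*^{k}}T^{k}=M_{|u|^{2k}}$ and the observation that a multiplication operator vanishes iff its symbol does are exactly the intended specialization, so there is nothing to add.
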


Let $T\in \mathcal{B}(\mathcal{H})$ be a normal operator, i.e., $T^*T=TT^*$. Then for every $k\in \mathbb{N}$ we have $T^{*^k}T^k=(T^*T)^k$.
So we have 

$$B_m(T)=\sum_{k=0}^{m}(-1)^{m-k}\binom{m}{k}(T^*T)^{k}=(T^*T-I)^m.$$

It is clear that if $T$ is normal and $m$-isometry, for some $m\in \mathbb{N}$, then it is isometry and consequently it is unitary. Moreover, if $T$ is $p$-hyponormal and $T^n$ is normal for some $n$, then $T$ is normal. Therefore if $T$ is $p$-hyponormal, $T^n$ is normal for some $n$ and $T$ is $m$-isometric, then $T$ is unitary. In the next proposition we find that in a large class of bounded linear operators on the Hilbert space $L^2(\mu)$, the subclasses of hyponormal, $p$-hyponormal and normal operators are coincide. 
\begin{prop}\label{p2.4}
Let $T=M_wEM_u$ be the bounded WCT operators on the Hilbert space $L^2(\mu)$. Then $T$ is hyponormal if and only if it is $p$-hyponormal if and only if it is normal. 
\end{prop}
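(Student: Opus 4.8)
The only substantial implications are that hyponormality and $p$-hyponormality each force normality, since the reverse implications are immediate: if $T$ is normal then $T^*T=TT^*$, whence $(T^*T)^p=(TT^*)^p$ for every $p\in(0,\infty)$, so $T$ is simultaneously hyponormal and $p$-hyponormal. The plan is to route both nontrivial implications through the single condition $\ker T\subseteq\ker T^*$, and then to show that this condition is equivalent to equality in the conditional H\"older inequality, which in turn characterizes normality. First I would record, as in Theorem~\ref{t1.6}, that $T^*=M_{\bar u}EM_{\bar w}$, so that $Tf=wE(uf)$ and $T^*f=\bar uE(\bar wf)$; pulling $|w|^2$ and $|u|^2$ through $E$ gives the quadratic forms $\|Tf\|_2^2=\int_X E(|w|^2)\,|E(uf)|^2\,d\mu$ and $\|T^*f\|_2^2=\int_X E(|u|^2)\,|E(\bar wf)|^2\,d\mu$.

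By the conditional H\"older inequality $|E(uw)|^2\le E(|u|^2)E(|w|^2)$, with equality holding (along the fibres of $\mathcal{A}$) exactly when $u$ and $\bar w$ are proportional, say $w=\lambda\bar u$ for some $\mathcal{A}$-measurable $\lambda$. Substituting this proportionality into the formulas for $T^*T$ and $TT^*$ from Theorem~\ref{t1.6} yields $T^*Tf=TT^*f=|\lambda|^2\,\bar u\,E(|u|^2)E(uf)$, so $T$ is normal; and conversely normality forces equality. Thus normality is equivalent to $|E(uw)|^2=E(|u|^2)E(|w|^2)$ a.e. on $S\cap G$.

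Next I would show that both hypotheses imply $\ker T\subseteq\ker T^*$. For a hyponormal $T$ this is immediate from $\|Tf\|_2\ge\|T^*f\|_2$. For a $p$-hyponormal $T$, if $f\in\ker T=\ker(T^*T)^p$ then $0=\langle(T^*T)^pf,f\rangle\ge\langle(TT^*)^pf,f\rangle=\|(TT^*)^{p/2}f\|_2^2\ge0$, so $(TT^*)^{p/2}f=0$; since $\ker(TT^*)^{p/2}=\ker TT^*=\ker T^*$, this gives $f\in\ker T^*$. Hence in either case $\ker T\subseteq\ker T^*$.

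The crux is then to prove that $\ker T\subseteq\ker T^*$ forces the H\"older equality, which I would do by contraposition with an explicit separating function. If $|E(uw)|^2<E(|u|^2)E(|w|^2)$ on a set of positive measure, choose $B\in\mathcal{A}$ with $B\subseteq S\cap G$, $0<\mu(B)<\infty$, on which $u$ and $w$ are bounded (possible by $\sigma$-finiteness), and put $f=\chi_B\bigl(w-\tfrac{E(uw)}{E(|u|^2)}\,\bar u\bigr)\in L^2(\mu)$. Using $E(gh)=hE(g)$ for $\mathcal{A}$-measurable $h$ together with $E(\bar u\bar w)=\overline{E(uw)}$, one computes $E(uf)=0$, so $f\in\ker T$, whereas $E(\bar wf)=\chi_B\,\tfrac{E(|u|^2)E(|w|^2)-|E(uw)|^2}{E(|u|^2)}$ is nonzero on the strict part of $B$, so $T^*f=\bar uE(\bar wf)\neq0$ and $f\notin\ker T^*$, a contradiction; combining the steps gives normal $\Leftrightarrow$ hyponormal $\Leftrightarrow$ $p$-hyponormal. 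I expect the main obstacle to lie precisely here: getting the conjugation bookkeeping in the separating function right, ensuring $f\in L^2(\mu)$ with $E(uf)=0$ exactly, and justifying that equality in the conditional H\"older inequality yields genuine pointwise proportionality $w=\lambda\bar u$ along the fibres of $\mathcal{A}$, the remaining manipulations being routine properties of $E$.
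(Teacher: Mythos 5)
Your argument is essentially correct, but it takes a genuinely different route from the paper's: the paper proves nothing directly, it simply quotes Theorem 3.2 of \cite{e} (normal $\Leftrightarrow$ $p$-hyponormal for WCT operators) and Theorem 3.4(a) of \cite{ej} (hyponormal $\Leftrightarrow$ $p$-hyponormal) and concatenates the two equivalences. What you supply is a self-contained proof of what the paper outsources, routed through the kernel inclusion $\ker T\subseteq\ker T^*$ and the equality case of the conditional H\"older inequality. The cycle normal $\Rightarrow$ hyponormal (resp. $p$-hyponormal) $\Rightarrow$ $\ker T\subseteq\ker T^*$ $\Rightarrow$ $|E(uw)|^2=E(|u|^2)E(|w|^2)$ on $S\cap G$ $\Rightarrow$ $w=\frac{E(uw)}{E(|u|^2)}\bar u$ on $S\cap G$ $\Rightarrow$ normal does close correctly, and the equality-implies-proportionality step you flag as an obstacle is handled by the computation $E(|h|^2)=E(|w|^2)-|E(uw)|^2/E(|u|^2)$ for $h=w-\frac{E(uw)}{E(|u|^2)}\bar u$, since $E(|h|^2)=0$ on an $\mathcal{A}$-set forces $h=0$ a.e.\ there. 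The paper's approach buys brevity at the cost of opacity; yours buys transparency and delivers the identity $E(|u|^2)E(|w|^2)=|E(uw)|^2$ for normal $T$ (cited from \cite{emj} in the theorem that follows) as a byproduct.

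One step needs repair. You choose $B\in\mathcal{A}$ ``on which $u$ and $w$ are bounded''; since $u$ and $w$ are only $\Sigma$-measurable, their truncation sets need not lie in $\mathcal{A}$, and no such $B\in\mathcal{A}$ of positive measure need exist. Yet $B$ must belong to $\mathcal{A}$ for the key cancellation $E(u\chi_Bh)=\chi_BE(uh)=0$ to be legitimate. The fix is to bound the $\mathcal{A}$-measurable data instead: take $B\in\mathcal{A}$, $B\subseteq S\cap G$, $0<\mu(B)<\infty$, contained in the strict-inequality set, on which $E(|u|^2)$ and $E(|w|^2)$ are bounded above and $E(|u|^2)$ is bounded away from $0$. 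Then $\int_B|h|^2\,d\mu=\int_BE(|h|^2)\,d\mu\le\int_BE(|w|^2)\,d\mu<\infty$, so $f=\chi_Bh\in L^2(\mu)$, and the rest of your computation ($E(uf)=0$, $E(\bar wf)=\chi_B\bigl(E(|u|^2)E(|w|^2)-|E(uw)|^2\bigr)/E(|u|^2)>0$ on $B$, hence $\|T^*f\|_2^2=\int_BE(|u|^2)|E(\bar wf)|^2\,d\mu>0$) goes through unchanged.
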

\begin{proof}
 As is proved in \cite{e}, Theorem 3.2, the WCT operator $M_wEM_u\in\mathcal{B}(\mathcal{H})$ is normal if and only if it is $p$-hyponormal. In addition, by Theorem 3.4 part (a) of \cite{ej}, we have $T$ is hyponormal if and only if it is $p$-hyponormal. Therefore by these observations we get that $T$ is hyponormal if and only if it is $p$-hyponormal if and only if it is normal. 
\end{proof}
As we mentioned above, if a bounded linear operator $T\in\mathcal{B(H)}$ is normal, then
 $$B_m(T)=\sum_{k=0}^{m}(-1)^{m-k}\binom{m}{k}(T^*T)^{k}=(T^*T-I)^m.$$
 
 Since $T^*T-I$ is a self adjoint operator, then it is normal and so we have 
  $$\|(T^*T-I)^m\|=r((T^*T-I)^m)=(r(T^*T-I))^m=\|T^*T-I\|^m.$$
  
  Therefore $T$ is isometry if and only if it is $m$-isometry for some $m\in \mathbb{N}$. In the next Theorem we prove that if the the WCT operator $T=M_wEM_u$ is $p$-hyponormal, then $T$ is isometry if and only if it is $m$-isometry for some $m\in \mathbb{N}$.
\begin{thm}
If $T=M_wEM_u$ is $p$-hyponormal, then the followings hold:\\

(a) The operator $T$ is isometric if and only if it is $m$-isometric for some $m\in \mathbb{N}$ if and only if $E(|w|^2)E(|u|^2)=1$, $\mu$-a.e.\\ 

(b) The operator $T$ is quasi-isometric if and only if it is quasi-$m$-isometric for some $m\in \mathbb{N}$ if and only if $E(|w|^2)E(|u|^2)=1$, $\mu$-a.e.\\ 

\end{thm}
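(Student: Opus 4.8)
The plan is to reduce both parts to the normal case via Proposition \ref{p2.4} and then exploit the identity $T^{*k}T^k=(T^*T)^k$, valid for normal operators, which turns the two defining sums into polynomials in the single positive operator $A:=T^*T$. First I would invoke Proposition \ref{p2.4}: since $T=M_wEM_u$ is $p$-hyponormal it is normal, so $T^{*k}T^k=(T^*T)^k$ for every $k$, and both $T$ and $T^*$ commute with $A=T^*T$ (because $TA=T^*T^2=AT$ once $TT^*=T^*T$).

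For part (a) I would use the remark preceding the theorem: normality gives $B_m(T)=(A-I)^m$, and since $A-I$ is self-adjoint, $\|B_m(T)\|=\|A-I\|^m$. Hence $B_m(T)=0$ for some $m$ forces $A=I$, i.e. $T$ is isometric, while the converse is the trivial case $m=1$. To produce the symbol condition I would then apply Theorem \ref{t1.6}(2) with $m=1$: there $J'_1=1$, so $T$ is isometric if and only if $E(|w|^2)E(|u|^2)=1$ $\mu$-a.e. Chaining these three equivalences yields (a).

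For part (b), using normality the quasi-$m$-isometry operator $T^*B_mT$ becomes $T^*(A-I)^mT=A(A-I)^m$, since $T^*$ and $T$ commute with every polynomial in $A$. Thus $T$ is quasi-$m$-isometric if and only if $A(A-I)^m=0$. Because $A$ is positive and self-adjoint, the continuous functional calculus shows that the polynomial $t\mapsto t(t-1)^m$ annihilates $A$ exactly when $\sigma(A)\subseteq\{0,1\}$; this set is the same for every $m\ge 1$, so quasi-$m$-isometry for some $m$ is equivalent to the case $m=1$, i.e. to $A$ being an orthogonal projection, i.e. to quasi-isometry. Finally $A=T^*T=M_{E(|w|^2)\bar u}EM_u$ is itself a WCT operator whose symbol is $E\big(u\,E(|w|^2)\bar u\big)=E(|w|^2)E(|u|^2)$, so by the spectral description recalled from \cite{e1} one has $\sigma(A)\setminus\{0\}=\text{ess range}(E(|w|^2)E(|u|^2))\setminus\{0\}$. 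Hence $\sigma(A)\subseteq\{0,1\}$ translates into $E(|w|^2)E(|u|^2)\in\{0,1\}$, which on $S\cap G$ (where this function is strictly positive) means $E(|w|^2)E(|u|^2)=1$ $\mu$-a.e., as required.

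The main care point is reconciling the spectral condition $\sigma(A)\subseteq\{0,1\}$, which tolerates the eigenvalue $0$, with the clean pointwise statement $E(|w|^2)E(|u|^2)=1$ $\mu$-a.e. This relies on the same support convention already in force in Theorem \ref{t1.6}: these a.e.\ identities are read on $S\cap G=S(E(|u|^2))\cap S(E(|w|^2))$, off which $T$ acts as the zero operator, and I would make that restriction explicit. A secondary step to verify carefully is the functional-calculus equivalence that $t(t-1)^m$ vanishing on $\sigma(A)$ is the same as $\sigma(A)\subseteq\{0,1\}$ simultaneously for all $m\ge 1$, since this is precisely what decouples the conclusion from the value of $m$.
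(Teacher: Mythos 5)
Your argument is correct, and for part (b) it takes a genuinely different route from the paper; both proofs start identically by invoking Proposition \ref{p2.4} to pass from $p$-hyponormality to normality. For part (a) the paper re-runs the symbol computation of Theorem \ref{t1.6}(2): it imports $E(|w|^2)E(|u|^2)=|E(uw)|^2$ from Corollary 2.15 of \cite{emj}, simplifies $J'_mE(|w|^2)E(|u|^2)$ to $(|E(uw)|^2-1)^m-(-1)^m$, and finishes with the spectral-radius argument on the symbol; you instead use the operator identities $B_m(T)=(T^*T-I)^m$ and $\|(T^*T-I)^m\|=\|T^*T-I\|^m$ (which the paper itself records in the paragraph preceding the theorem) to collapse all $m$ to the case $m=1$, and only then quote Theorem \ref{t1.6}(2) with $J'_1=1$. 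For part (b) the divergence is sharper: the paper simply applies Theorem \ref{t1.6}(1) ($T$ is quasi-$m$-isometric iff $|E(uw)|=1$ a.e., for every $m$) and converts $|E(uw)|^2$ into $E(|w|^2)E(|u|^2)$ via normality, whereas you compute $T^*B_mT=A(A-I)^m$ with $A=T^*T$, use the continuous functional calculus to reduce everything to $\sigma(A)\subseteq\{0,1\}$ independently of $m$, and then read off $\sigma(A)$ from the spectral formula of \cite{e1} applied to the WCT operator $A=M_{E(|w|^2)\bar u}EM_u$. Your route avoids the citation of \cite{emj} in part (b) and makes the $m$-independence structurally transparent, at the cost of redoing work that Theorem \ref{t1.6}(1) has already packaged. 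Your care point about supports is well taken and in fact applies to the published statement too: the proof of Theorem \ref{t1.6}(1) only yields $|E(uw)|=1$ a.e.\ on $S\cap G$, so the unrestricted conclusion ``$E(|w|^2)E(|u|^2)=1$ $\mu$-a.e.'' in part (b) tacitly assumes $S\cap G=X$ (for part (a) there is no such issue, since an isometry is injective and forces the essential range to be exactly $\{1\}$); making that restriction explicit, as you propose, tightens rather than weakens the result.
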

\begin{proof}
(a) By Theorem \ref{t1.6} part (2), $T$ is $m$-isometry if and only if

$$B_m(T)=\sum_{k=0}^{m}(-1)^{m-k}\binom{m}{k}T^{*^{k}}T^{k}=((-1)^mI+\sum_{k=1}^{m}(-1)^{m-k}\binom{m}{k}M_{|E(uw)|^{2(k-1)}}T^*T)=0$$

if and only if
 
$$\|((-1)^mI+M_{J'_m}M_{E(|w|^2)\bar{u}}EM_u)\|=0,$$ 

in which  $J'_m=\sum_{k=1}^{m}(-1)^{m-k}\binom{m}{k}|E(uw)|^{2(k-1)}$. Since $T$ is normal, then by Corollary 2.15 of \cite{emj} we have $E(|w|^2)E(|u|^2)=|E(uw)|^2$. So direct computations show that 

$$J''_m=J'_mE(|w|^2)E(|u|^2)=(|E(uw)|^2-1)^m-(-1)^m.$$

 Thus we have
\begin{align*}
\|((-1)^mI+M_{J''_m}M_{\frac{\chi_S}{E(|u|^2)}\bar{u}}EM_u)\|&=r((-1)^mI+M_{J''_m}M_{\frac{\chi_S}{E(|u|^2)}\bar{u}}EM_u)\\
&=\sup_{\lambda\in E}|(-1)^m+\lambda|,
\end{align*}
 in which $S=S(E(|u|^2))$ and
  $$E=\sigma(M_{J''_m}M_{\frac{\chi_S}{E(|u|^2)}\bar{u}}EM_u)=\text{ess \ range}(J''_m).$$
 
 Therefore $T$ is $m$-isometry if and only if $\sup_{\lambda\in E}|(-1)^m+\lambda|=0$ if and only if $E=\{1\}$, for odd $m$ and $E=\{-1\}$, for even $m$. Equivalently, $T$ is $m$-isometry if and only if $|J'_m|E(|w|^2)E(|u|^2)=1$, $\mu$, a.e.\\
 
 (b) If $T$ is $p$-hyponormal, then by Remark \ref{p2.4} we get that $T$ is normal and so 
 $$E(|w|^2)E(|u|^2)=|E(uw)|^2.$$
 Hence by Theorem \ref{t1.6} we get that $T$ is quasi-isometric if and only if it is quasi-$m$-isometric, for some $m\in \mathbb{N}$, if and only if $E(|w|^2)E(|u|^2)=|E(uw)|^2=1$, $\mu$-a.e. Therefore we have the result.\\

\end{proof}
In the next corollary we see that if the conditional type Holder inequality for $u,w$ is an equality, then we have a characterization for isometric, $m$-isometric, quasi-isometric and quasi-$m$-isometric WCT operators.  
\begin{cor}\label{c2.6}
 If $T=M_wEM_u$ and $E(|w|^2)E(|u|^2)=|E(uw)|^2$, then the followings are equivalent:
 \begin{itemize}
 \item  $T$ is isometric;
 
 \item $T$ is $m$-isometric for some $m\in \mathbb{N}$;
 
 \item $T$ is quasi-isometric;
 
 \item $T$ is quasi-$m$-isometric for some $m\in \mathbb{N}$;
 
 \item $E(|w|^2)E(|u|^2)=|E(uw)|^2=1$, $\mu$-a.e.
 \end{itemize}
\end{cor}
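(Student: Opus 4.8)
The plan is to collapse all five conditions onto the single pointwise scalar equation $|E(uw)|^2=1$, $\mu$-a.e., exploiting that the standing hypothesis $E(|w|^2)E(|u|^2)=|E(uw)|^2$ turns every weight appearing in Theorem \ref{t1.6} into a function of the one quantity $t:=|E(uw)|^2$. Throughout I would work on the support $S=S(E(|u|^2))\cap S(E(|w|^2))$, since $T$ vanishes off this set and all the "a.e." conditions are only tested there.

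First I would dispose of the two quasi-conditions. Note that quasi-isometry, $T^{*^2}T^2=T^*T$, is exactly quasi-$1$-isometry, and by Theorem \ref{t1.6}(1) quasi-$m$-isometry holds, for any $m\in\mathbb{N}$, if and only if $|E(uw)|=1$, i.e. $t=1$. Since the hypothesis gives $t=E(|w|^2)E(|u|^2)$, this is precisely the fifth condition. Hence the quasi-isometric, quasi-$m$-isometric, and fifth bullets are mutually equivalent, with no restriction on $m$ required.

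Next I would handle the isometric and $m$-isometric conditions via Theorem \ref{t1.6}(2). The one genuine computation, done once, is the scalar identity: because $J_m=\sum_{k=0}^{m}(-1)^{m-k}\binom{m}{k}t^{k}=(t-1)^m$ by the binomial theorem, peeling off the $k=0$ term and multiplying $J'_m$ by $t$ yields
$$J'_m\,t=\sum_{k=1}^{m}(-1)^{m-k}\binom{m}{k}t^{k}=(t-1)^m-(-1)^m.$$
Using the hypothesis to replace the factor $t$ by $E(|w|^2)E(|u|^2)$, I get $J'_mE(|w|^2)E(|u|^2)=(t-1)^m-(-1)^m$ a.e. By Theorem \ref{t1.6}(2), $T$ is $m$-isometric if and only if the essential range of $J'_mE(|w|^2)E(|u|^2)$ is the single value $(-1)^{m+1}$; substituting the identity, $(t-1)^m-(-1)^m=(-1)^{m+1}=-(-1)^m$ collapses to $(t-1)^m=0$, hence $t=1$. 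Taking $m=1$ (where $J'_1=1$) recovers the isometric case. Thus the isometric and $m$-isometric bullets are each equivalent to $t=E(|w|^2)E(|u|^2)=1$, closing the cycle of equivalences.

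The substantive step, and the only place where the hypothesis is indispensable, is the conversion $J'_mE(|w|^2)E(|u|^2)=(t-1)^m-(-1)^m$: without the equality $E(|w|^2)E(|u|^2)=|E(uw)|^2$ the left-hand weight is not a function of $t$ alone, and $m$-isometry genuinely fails to reduce to isometry (as the multiplication-operator corollary to Theorem \ref{t1.6} already shows for general $u$). Everything else is bookkeeping with the binomial identity and the already-established characterizations of Theorem \ref{t1.6}, so I expect no further obstacle beyond keeping the parity of $m$ straight in $(-1)^{m+1}=-(-1)^m$.
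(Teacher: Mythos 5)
Your proposal is correct and follows essentially the same route as the paper: the corollary is the content of Theorem 2.5 with the identity $E(|w|^2)E(|u|^2)=|E(uw)|^2$ assumed outright rather than deduced from normality, and your key computation $J'_m\,|E(uw)|^2=(|E(uw)|^2-1)^m-(-1)^m$ is exactly the paper's $J''_m$ identity, after which both arguments reduce every condition to $|E(uw)|^2=1$ $\mu$-a.e.\ via Theorem \ref{t1.6}. Your explicit reduction of the quasi-cases to Theorem \ref{t1.6}(1) and of isometry to the case $m=1$ (where $J'_1=1$) is the same bookkeeping the paper leaves implicit.
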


Finally, we provide some examples to illustrate our main results.

\begin{exam} 

(a) Let $X=[0,1]\times [0,1]$, $d\mu=dxdy$,
$\Sigma$ the Lebesgue subsets of $X$ and let
$\mathcal{A}=\{A\times [0,1]: A \ \mbox{is a Lebesgue set in} \
[0,1]\}$. Then, for each $f$ in $L^2(\Sigma)$, $(Ef)(x,
y)=\int_0^1f(x,t)dt$, which is independent of the second
coordinate. Now, if we take $u(x,y)=y^{\frac{x}{8}}$ and $w(x,
y)=\sqrt{(4+x)y}$, then $E(|u|^2)(x,y)=\frac{4}{4+x}$ and
$E(|w|^2)(x,y)=\frac{4+x}{2}$. So, $E(|u|^2)(x,y)E(|w|^2)(x,y)=2$
and $|E(uw)|^2(x,y)=64\frac{4+x}{(x+12)^2}$. Direct computations
shows that 
$$E(|u|^2)(x,y)E(|w|^2)(x,y)\leq|E(uw)|^2(x,y), \ \ \text{a.e,}$$
and also by conditional type Holder inequality we have 
$$E(|u|^2)(x,y)E(|w|^2)(x,y)\geq|E(uw)|^2(x,y), \ \ \ \text{a.e.},$$ 
Hence we get that 
$$E(|u|^2)(x,y)E(|w|^2)(x,y)=|E(uw)|^2(x,y),  \ \ \ \text{a.e.}, $$
Now by Corollary \ref{c2.6} we get that $T=M_wEM_u$ is not isometric (equivalently, $m$-isometric, quasi-isometric, quasi-$m$-isometric), because the equation 
$$E(|u|^2)(x,y)E(|w|^2)(x,y)=|E(uw)|^2(x,y)=1$$
just for $x=4$ and so  
$$E(|u|^2)(x,y)E(|w|^2)(x,y)=|E(uw)|^2(x,y)\neq 1,  \ \ \ \text{a.e.}$$

(b)  Let $X=\mathbb{N}$,
$\mathcal{G}=2^{\mathbb{N}}$ and let $\mu(\{x\})=pq^{x-1}$, for
each $x\in X$, $0\leq p\leq 1$ and $q=1-p$. Elementary
calculations show that $\mu$ is a probability measure on
$\mathcal{G}$. Let $\mathcal{A}$ be the $\sigma$-algebra generated
by the partition $B=\{X_1, X^{c}_1\}$ of $X$, where $X_1=\{3n:n\geq1\}$. So,
for every $f\in \mathcal{D}(E^{\mathcal{A}})$,

$$E^{\mathcal{A}}(f)=\alpha_1\chi_{X_1}+\alpha_2\chi_{X^c_1}$$
and direct computations show that

$$\alpha_1=\frac{\sum_{n\geq1}f(3n)pq^{3n-1}}{\sum_{n\geq1}pq^{3n-1}}$$
and
$$\alpha_2=\frac{\sum_{n\geq1}f(n)pq^{n-1}-\sum_{n\geq1}f(3n)pq^{3n-1}}{\sum_{n\geq1}pq^{n-1}-\sum_{n\geq1}pq^{3n-1}}.$$

If we set $w(n)=n$ and $u(n)=\frac{1}{n}$, then we have 
$$\alpha_1=1, \ \ \ \ \ \alpha_2=1.$$
and so 

$$E^{\mathcal{A}}(wu)(n)=\chi_{X_1}(n)+\chi_{X^c_1}(n)=1,\ \ \ \ \text{for all} \ n\in \mathbb{N}.$$
Therefore by Theorem \ref{t1.6} we get that $T=M_wEM_u$ is quasi-$m$-isometry on $L^2(\mathbb{N},2^{\mathbb{N}},\mu)$,  for all $m\in \mathbb{N}$.

\end{exam}

\end{document}